\newtheorem{Theorem}{Theorem}[section]
\newtheorem{Lemma}[Theorem]{Lemma}
\newtheorem{Proposition}[Theorem]{Proposition}
\newtheorem{Corollary}[Theorem]{Corollary}
\newtheorem{Example}[Theorem]{Example}
\newcommand{\DD}{\mathcal{D}}
\newcommand{\OO}{\mathcal{O}}
\newcommand{\cN}{\mathcal{N}}
\newcommand{\cA}{\mathcal{A}}
\newcommand{\cM}{{m}}
\def\Gl{\operatorname{GL}}
\def\Soc{\operatorname{Soc}}
\def\gr{\operatorname{gr}}
\begin{document}

\title{On pairs of commuting nilpotent matrices}
\author{Toma\v z Ko\v sir and Polona Oblak}
\date{\today}

\address{T. Ko\v sir:~Department of Mathematics, Faculty of Mathematics
and Physics, University of Ljubljana, Jadranska 19, SI-1000 Ljubljana, Slovenia; e-mail: 
tomaz.kosir@fmf.uni-lj.si.}
\address{P. Oblak: Department of Mathematics, Institute of Mathematics, Physics, and Mechanics, 
Jadranska 19, SI-1000 Ljubljana, Slovenia; e-mail: polona.oblak@fmf.uni-lj.si. \newline\noindent Current address: 
University of Ljubljana, Faculty of Computer and Information Science, Tr\v za\v ska cesta 25,
SI-1001 Ljubljana, Slovenia.}

\begin{abstract} 
Let $B$ be a nilpotent matrix and suppose that its Jordan canonical form is determined by a partition
$\lambda$. Then it is known that its nilpotent commutator $\cN_B$ is an irreducible 
variety and that there is a unique partition $\mu$ such that the intersection of the orbit of nilpotent matrices
corresponding to $\mu$ with 
$\cN_B$ is dense in $\cN_B$. We prove that map $\DD$ given by $\DD(\lambda)=\mu$ is an idempotent map. This answers a
question of Basili and Iarrobino \cite{Iarr2007} and gives a partial answer to a question of Panyushev \cite{Pany2007}.
In the proof, we use the fact that for a generic matrix $A\in\cN_B$ the algebra generated by $A$ and $B$ is
a Gorenstein algebra. Thus, a generic pair of commuting nilpotent matrices generates a Gorenstein algebra. 
We also describe $\DD(\lambda)$ in terms of $\lambda$ if $\DD(\lambda)$ has at most two parts.
\end{abstract}

\subjclass[2000]{Primary. 13E10, 15A27. Secondary. 14L30.} 
\keywords{Nilpotent matrices, commuting matrices, nilpotent commutator, nilpotent orbit,
maximal partition, Gorenstein artinian algebra}

\maketitle

\section{Introduction}

We denote by $M_n(F)$ the algebra of all $n\times n$ matrices over an algebraically closed field $F$ and by $\cN$
the variety of all nilpotent matrices in $M_n(F)$. 
Let $B\in \cN$ be a nilpotent matrix and suppose that its Jordan canonical form is determined by a partition
$\lambda$. We denote by $\OO_B=\OO_{\lambda}$ the orbit of $B$ under the $\Gl_n(F)$
action on $\cN$ and by $\cN_B$ the nilpotent commutator of $B$, i.e. the set
of all $A\in\cN$ such that $AB=BA$. It is known that $\cN_B$ is an irreducible variety (see Basili \cite{Bas2003}).
So there is a unique partition $\mu$ of $n$ such that $\OO_{\mu}\cap\cN_B$ is dense in $\cN_B$. Following Basili and
Iarrobino \cite{Iarr2007}, and Panyushev \cite{Pany2007} we define a map on the set of all partitions of $n$ by 
$\DD(\lambda)=\mu$. (We note that in \cite{BaIa2007,Iarr2007} this map is denoted by $Q$.)  
As in \cite{Pany2007} we say that $B$ or its orbit $\OO_{\lambda}$ is
\emph{self-large} if $\DD(\lambda)=\lambda$. According to \cite{BaIa2007,Iarr2007}, a partition $\lambda$ is called
\emph{stable} if $\DD(\lambda)=\lambda$.

The work presented in this paper was initially stimulated by a question if the partition $\DD(\lambda)$ is stable, i.e.
if $\DD$ is an idempotent map, that was posed by Basili and Iarrobino in their conference notes \cite{Iarr2007}. 
It was later brought to our attention that Panyushev in \cite[Problem 2]{Pany2007} stated the same question 
in a more general setup of simple Lie algebras. The general question in the theory of nilpotent orbits of semisimple 
Lie algebras is similar to the question described above. Suppose that $\mathfrak{g}$ is a semisimple Lie algebra and 
$G$ its adjoint group. If $x\in\mathfrak{g}$ is a nilpotent element
and $\mathfrak{z}_{\mathfrak{g}}(x)$ its centralizer in $\mathfrak{g}$, then there is
a unique maximal nilpotent $G$-orbit, say $Gy$, meeting ${\mathfrak
z}_{\mathfrak g}(x)$. It was pointed out to us by a referee that the latter follows from the fact that 
the set $\mathcal{N_{\mathfrak{h}}}$ of all nilpotent elements of an algebraic
Lie algebra $\mathfrak{h}$ is irreducible, which in turn can be deduced from the existence of the Levi decomposition and the
irreducibility of $\mathcal N_{\mathfrak{h}}$ in the reductive case, which was proved already by Kostant \cite[\S 3]{Kost1963}.
The question is then if the largest nilpotent orbit meeting ${\mathfrak z}_{\mathfrak g}(y)$ is $Gy$ itself 
\cite[Problem 2]{Pany2007}. A nilpotent orbit $Gy$ is said to be \emph{self-large}, if it is the
largest nilpotent orbit meeting ${\mathfrak z}_{\mathfrak g}(y)$. Thus, the question is if the largest nilpotent orbit
meeting ${\mathfrak z}_{\mathfrak g}(x)$ is necessarily self-large. %This can also be expressed by saying that certain
%mapping on the set of nilpotent orbits is idempotent. %A general description of  self-large nilpotent orbits is given in
%Panyushev's article \cite{Pany2007}, but it does not yield an answer to the above question.

Our main result is the proof of the fact that $\DD$ is an idempotent map on nilpotent orbits of $M_n(F)$. This answers 
the original question of Basili and Iarrobino and hence also a 
special ${\mathfrak sl}_n$ case of Panyushev's question. In the proof we use an extension of a lemma of 
Baranovsky \cite{Bar2001}.
We prove that a generic pair of commuting nilpotent matrices generates a Gorenstein algebra (in fact, a complete 
intersection \cite[Cor. 21.20]{Eis}). Moreover, a generic matrix $A\in \cN_B$ and $B$ generate
a Gorenstein algebra. Then
we use Macaulay's Theorem on the Hilbert function of the intersection of two plane curves \cite{Mac1904} 
(see also \cite{Iarr1994}) together with some results of \cite{BaIa2007}
to prove that $\DD$ is an idempotent. It appears that an answer to the general question posed 
by Panyushev \cite[Problem 2]{Pany2007} requires methods different from ours. We see no immediate generalization of 
our proof to the general setup of simple Lie algebras.

It is an interesting question \cite[Problem 1]{Pany2007} to describe $\DD(\lambda)$ in terms of partition $\lambda$. 
We would like to mention that some partial results 
to this question were obtained by the second author in \cite{Obl_indeks,Obl_schroer}. 
The main result of \cite{Obl_indeks} (see also \cite{BaIa_Invol}) gives the answer in the case when $\DD(\lambda)$ 
has at most two parts. We discuss this in the last section. 

Panyushev's work \cite{Pany2007} was motivated by Premet's results on the nilpotent 
commuting variety \cite{Prem_03} of a simple Lie algebra. For other results on special pairs of nilpotent commuting 
elements in these varieties see \cite{Ginz2000,Pany2001}. Some of other references for the theory of 
commuting varieties are \cite{Pany1994,PanYak07,Popo_08,Rich1979,Vasc1994}.

This paper is an extension of our unpublished note \cite{KosObl07}.

\vskip 20pt

\section{Gorenstein pairs are dense}

In this section we prove that a generic pair of commuting nilpotent matrices generates a Gorenstein (local 
artinian) algebra. 

Suppose that $B\in\cN$ and that $A\in\cN_B$. Then we denote by $\cA=\cA(A,B)$ the unital subalgebra
of $M_n(F)$ generated by matrices $A$ and $B$, and by $\cA^T=\cA(A^T,B^T)$ the unital subalgebra generated by the transposed 
matrices $A^T$ and $B^T$. The algebra $\cA$ is a commutative local artinian algebra. Such an algebra is Gorenstein if 
its socle $\Soc(\cA)$ is a simple $\cA$-module (see \cite[Prop. 21.5]{Eis}); i.e., if 
$\dim_F (\Soc(\cA))=1$, where $\Soc(\cA)=(0:\cM)$ is the annihilator of the maximal ideal $m$ of $\cA$.

We write $\cN_2\subset M_n(F)\times M_n(F)$ for the variety of all 
commuting pairs of nilpotent matrices. Note that the subset $U\subset \cN_2\times F^n\times F^n$
of all quadruples $(A,B,v,w)$ such that $v$ is a cyclic vector for $(A,B)\in\cN_2$ and $w$ cyclic for 
$(A^T,B^T)$ is an open subset. The fact that $U$ is open follows since its 
complement is given by a set of polynomial conditions $\det X = 0$ and $\det Y = 0$, where $X$ runs over all square 
matrices with columns of the form $A^iB^jv$ and $Y$ over all square matrices with the columns of the form 
$\left(A^T\right)^i\left(B^T\right)^jw$. Here it is certainly enough to take $0\le i,j \le n-1$. 
The same argument shows that also $U_B=\left\{(A,v,w);\ (A,B,v,w)\in U\right\}$ is an open subset 
of $\cN_B \times F^n \times F^n$.

\begin{Lemma}\label{lem1}
Suppose that $(A,B)\in\cN_2$. Then there is a third nilpotent matrix $C$ and vectors $v,w\in F^n$ such that:
\begin{enumerate}
\item[(i)] $C$ commutes with $B$,
\item[(ii)] any linear combination $\alpha A + \beta C$ is nilpotent,
\item[(iii)] $v$ is a cyclic vector for $(C,B)$,
\item[(iv)] $w$ is a cyclic vector for $(C^T,B^T)$.
\end{enumerate}
\end{Lemma}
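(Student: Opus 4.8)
The plan is to build $C$ as a generic point of a suitable line through $A$. First note that once every $\alpha A+\beta C$ is nilpotent, the whole line $\{\alpha A+\beta C\}$ lies in $\cN_B$, since each of its points also commutes with $B$ (both $A$ and $C$ do). Conversely, if $D\in M_n(F)$ is a direction for which $L=A+F\cdot D\subseteq\cN_B$, then $D$ commutes with $B$ and is nilpotent (compare traces of powers as $t\to\infty$), so $\mathrm{span}(A,D)\subseteq\cN$; hence any $C\in L\setminus\{A\}$ already satisfies (i) and (ii). Writing $V\subseteq\cN_B$ for the set of $C$ such that both $(C,B)$ and $(C^{T},B^{T})$ admit a cyclic vector, it then remains to prove: (A) $V\neq\emptyset$, so that $V$ is dense in the irreducible variety $\cN_B$ (Basili \cite{Bas2003}); and (B) there is a line $L$ through $A$ inside $\cN_B$ that is not contained in the proper closed subset $\cN_B\setminus V$. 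Given both, a generic $C\in L$ lies in $V$ and satisfies (i)--(ii), while a generic $v$ (resp. $w$) is cyclic for $(C,B)$ (resp. $(C^{T},B^{T})$), as these are open conditions.

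For (A), first reformulate cyclicity: by Nakayama's lemma applied to the local artinian algebra $\cA(C,B)$, whose maximal ideal $\mathfrak m$ satisfies $\mathfrak m F^{n}=\im C+\im B$, the pair $(C,B)$ has a cyclic vector exactly when $\dim_F\big(F^{n}/(\im C+\im B)\big)=1$, i.e. when $\rk[\,C\mid B\,]=n-1$ for the $n\times 2n$ matrix $[\,C\mid B\,]$; dually, $(C^{T},B^{T})$ has a cyclic vector exactly when $\dim(\ker C\cap\ker B)=1$. Thus (A) reduces to producing a single $C\in\cN_B$ with these two numerical properties, which is an extension of the lemma of Baranovsky \cite{Bar2001}. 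I would prove it by induction on $n$, using the decomposition of $F^{n}$ into $B$-cyclic summands: apply the inductive hypothesis to $B$ restricted to a carefully chosen proper $B$-submodule, then extend $C$ across the remaining generators so that it merges them into a single one; the base case $B=0$, where one must exhibit a regular nilpotent matrix, is handled directly by triangularizing $A$ and taking $C$ to be a regular nilpotent compatible with that triangularization.

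For (B), simultaneously put $A$ and $B$ in strictly upper triangular form with respect to a common complete flag $\mathcal F$ — possible since $A$ and $B$ generate a commutative algebra of nilpotents — chosen to be adapted to the $B$-module structure of $F^{n}$. The linear space $P_{\mathcal F}=\{X\in M_n(F):\ X\mathcal F_i\subseteq\mathcal F_{i-1}\text{ for all }i,\ XB=BX\}$ then contains $A$ and $B$, is contained in $\cN_B$, and has the property that for every $X\in P_{\mathcal F}$ the pencil $\{\alpha A+\beta X\}$ is strictly upper triangular, hence nilpotent. Choosing $\mathcal F$ so that $P_{\mathcal F}$ meets $V$, a generic $C\in P_{\mathcal F}$ then satisfies all four conditions of the lemma.

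The main obstacle is the interplay of (A) and (B): the common flag $\mathcal F$ must be chosen so that $P_{\mathcal F}$ is large enough to contain a matrix $C$ with both $(C,B)$ and $(C^{T},B^{T})$ cyclic — equivalently, the doubly cyclic matrix produced in (A) must be realizable in strictly upper triangular form alongside $A$. This is precisely where Baranovsky's construction has to be extended and the transposed condition (iv) controlled at the same time; everything else — irreducibility of $\cN_B$, openness of the cyclicity conditions, and the reductions above — is formal.
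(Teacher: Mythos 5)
Your setup correctly captures the shape of the problem: the reduction of (iii) to $\rk[C\mid B]=n-1$ via Nakayama, the dual reduction of (iv) to $\dim(\ker C\cap\ker B)=1$, the observation that both are open conditions, and the reformulation of (ii) as the requirement that the entire pencil $\mathrm{span}(A,C)$ sit inside $\cN$ (and hence inside $\cN_B$). These reductions are correct and essentially match the surrounding material in the paper (the discussion of the open set $U_B$ and Proposition~\ref{Gorenstein}). However, you stop precisely where the proof has to begin. For step (A) you write ``I would prove it by induction on $n$\dots'' without carrying out the induction, and you explicitly flag the real difficulty yourself: the $C$ produced in (A) must simultaneously be realizable inside a strictly-upper-triangular pencil compatible with $A$, which is ``precisely where Baranovsky's construction has to be extended and the transposed condition (iv) controlled at the same time.'' That interlocking construction \emph{is} the lemma; the rest, as you say, is formal. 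Leaving it as a named obstacle means the proposal is a plan, not a proof.

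Two further cautions about the framing. First, your (A)$\Rightarrow$density step does not by itself give (B): density of $V$ in $\cN_B$ does not guarantee that any particular line (or pencil) through the given $A$ meets $V$, so one cannot outsource the pencil-compatibility to a genericity argument; the construction has to produce $C$ in the right upper-triangular cone from the start. Second, in the paper the logical order is the reverse of what you propose: the density statement (Proposition~\ref{th1}/Corollary~\ref{cor1}) is \emph{deduced from} Lemma~\ref{lem1} by exactly the line argument you sketch, not used to prove it. The paper's actual proof is an explicit construction: take the Jordan basis $\{e_{ijk}\}$ of $B$ from Baranovsky's Lemma~3 (with $A$ acting strictly ``downward'' in the lexicographic order on $(i,j,k)$), and define $C e_{ijk}=e_{i,j+1,k}$ for $j<r_i$ and $C e_{i,r_i,k}=e_{i+1,1,k}+e_{i-1,1,k+\delta_i}$. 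The commutation with $B$ and the pencil condition (ii) are then immediate from the triangularity, $e_{111}$ is checked to be cyclic for $(C,B)$ by induction on $i$, and the additional term $e_{i-1,1,k+\delta_i}$ (the genuinely new piece beyond Baranovsky) is exactly what makes $e_{1r_1\lambda_1}$ cyclic for $(C^T,B^T)$. This explicit formula is the content you need to supply.
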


\begin{proof} The proof is an extension of the proof of Baranovsky \cite[Lem. 3]{Bar2001}. Let 
$\lambda=\left(\lambda_1^{r_1},\lambda_2^{r_2}\ldots,\lambda_l^{r_l}\right)$, where $\lambda_1 > \lambda_1> \cdots >
\lambda_l>0$ and $r_i\ge 1$ for all $i$ be the partition
corresponding to $B$. As in the proof of \cite[Lem. 3]{Bar2001} there is a Jordan basis 
$$\left\{e_{ijk}: 1\le i \le l,\ 1\le j \le r_i,\ 1\le k 
\le \lambda_i\right\}$$ 
for $B$ such that
\begin{enumerate}
\item $Be_{ijk}=e_{ij,k+1}$ if $k < \lambda_i$ and $Be_{ij\lambda_i}=0$,
\item $Ae_{ijk}$ is in the linear span of vectors 
$e_{fgh}$, where either 
\begin{itemize}
\item $f>i$ and $g,h$ arbitrary, or 
\item $f=i$ and $g>j$ and $h$ arbitrary, or 
\item $f=i$ and $g=j$ and $h>k$.
\end{itemize}
\end{enumerate}
To simplify our expressions we assume that $e_{ijk}=0$ if the three indices  $i,j,k$ do not satisfy the conditions
$1\le i \le l,\ 1\le j \le r_i$ and $1\le k \le \lambda_i$. We also use the difference sequence of $\lambda$; we write $\delta_i=
\lambda_{i}-\lambda_{i-1}$ for $i=2,3\ldots,l$. Now we define $C$ by 
$Ce_{ijk} = e_{i,j+1,k}$ if $j < r_i$ and $Ce_{i,r_i,k} = e_{i+1,1,k} + e_{i-1,1,k+\delta_i}$. 

To illustrate the actions of $B$ and $C$ we include an example. We consider the case $\lambda=(4^2,3^2,2,1^2)$. 
%With respect to 
%the basis $\{e_{ijk}\}$ matrix $B$ is in the Jordan canonical form and matrix $C$ is equal to
%$$
%C=\left[\begin{array}{cccc|cccc|ccc|ccc|cc|c|c}
%0 & 0 & 0 & 0 & 1 & 0 & 0 & 0 & 0 & 0 & 0 & 0 & 0 & 0 & 0 & 0 & 0 & 0 \\
%0 & 0 & 0 & 0 & 0 & 1 & 0 & 0 & 0 & 0 & 0 & 0 & 0 & 0 & 0 & 0 & 0 & 0 \\
%0 & 0 & 0 & 0 & 0 & 0 & 1 & 0 & 0 & 0 & 0 & 0 & 0 & 0 & 0 & 0 & 0 & 0 \\
%0 & 0 & 0 & 0 & 0 & 0 & 0 & 1 & 0 & 0 & 0 & 0 & 0 & 0 & 0 & 0 & 0 & 0 \\
%\hline
%0 & 0 & 0 & 0 & 0 & 0 & 0 & 0 & 1 & 0 & 0 & 0 & 0 & 0 & 0 & 0 & 0 & 0 \\
%0 & 0 & 0 & 0 & 0 & 0 & 0 & 0 & 0 & 1 & 0 & 0 & 0 & 0 & 0 & 0 & 0 & 0 \\
%0 & 0 & 0 & 0 & 0 & 0 & 0 & 0 & 0 & 0 & 1 & 0 & 0 & 0 & 0 & 0 & 0 & 0 \\
%0 & 0 & 0 & 0 & 0 & 0 & 0 & 0 & 0 & 0 & 0 & 0 & 0 & 0 & 0 & 0 & 0 & 0 \\
%\hline
%0 & 0 & 0 & 0 & 0 & 0 & 0 & 0 & 0 & 0 & 0 & 1 & 0 & 0 & 0 & 0 & 0 & 0 \\
%0 & 0 & 0 & 0 & 0 & 0 & 0 & 0 & 0 & 0 & 0 & 0 & 1 & 0 & 0 & 0 & 0 & 0 \\
%0 & 0 & 0 & 0 & 0 & 0 & 0 & 0 & 0 & 0 & 0 & 0 & 0 & 1 & 0 & 0 & 0 & 0 \\
%\hline
%0 & 1 & 0 & 0 & 0 & 0 & 0 & 0 & 0 & 0 & 0 & 0 & 0 & 0 & 1 & 0 & 0 & 0 \\
%0 & 0 & 1 & 0 & 0 & 0 & 0 & 0 & 0 & 0 & 0 & 0 & 0 & 0 & 0 & 1 & 0 & 0 \\
%0 & 0 & 0 & 1 & 0 & 0 & 0 & 0 & 0 & 0 & 0 & 0 & 0 & 0 & 0 & 0 & 0 & 0 \\
%\hline
%0 & 0 & 0 & 0 & 0 & 0 & 0 & 0 & 0 & 1 & 0 & 0 & 0 & 0 & 0 & 0 & 1 & 0 \\
%0 & 0 & 0 & 0 & 0 & 0 & 0 & 0 & 0 & 0 & 1 & 0 & 0 & 0 & 0 & 0 & 0 & 0 \\
%\hline
%0 & 0 & 0 & 0 & 0 & 0 & 0 & 0 & 0 & 0 & 0 & 0 & 0 & 0 & 0 & 0 & 0 & 1 \\
%\hline
%0 & 0 & 0 & 0 & 0 & 0 & 0 & 0 & 0 & 0 & 0 & 0 & 0 & 0 & 0 & 1 & 0 & 0 
%\end{array}
%\right]\, .
%$$
To indicate the actions of $B$ and $C$ we draw a directed 
graph whose vertices correspond to the vectors of the Jordan basis 
$\{e_{ijk}\}$ and the edges to the nonzero elements of $B$ and $C$. In the graph in Figure \ref{fig}
 \begin{figure}[htb]
       \begin{center}
        \includegraphics[height=5cm, width=11cm]{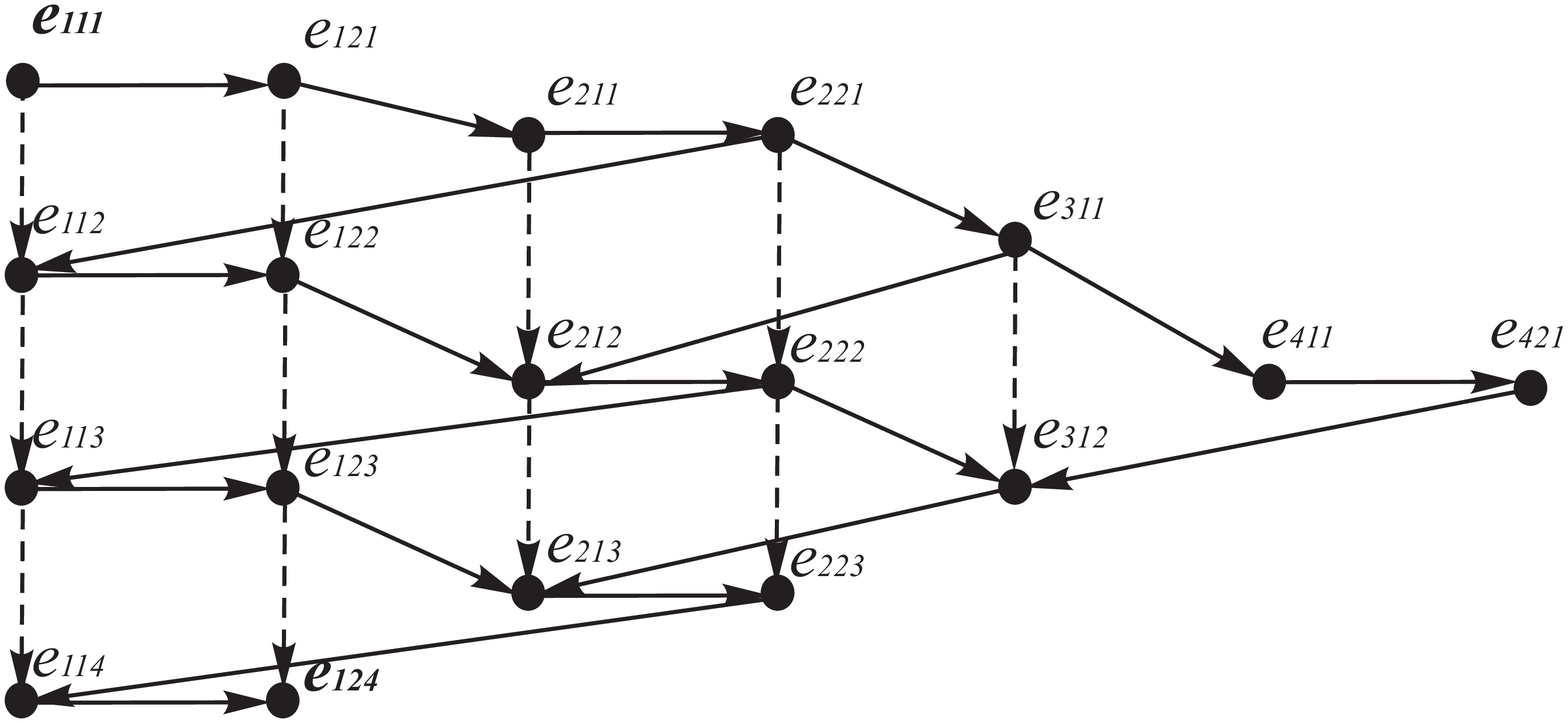}
       \end{center}
       \caption{}\label{fig}
 \end{figure}
the dashed directed edges correspond to the action of matrix $B$ and the nondashed ones correspond to the action of 
matrix $C$. Note that the graph indicating the action of $B^T$ and $C^T$ is obtained by reversing all the edges in the 
graph corresponding to $B$ and $C$. Note that vector $e_{111}$ is cyclic and vector $e_{124}$ cocyclic,
i.e., cyclic for $B^T$ and $C^T$.

Let us continue with the proof. It is easy to show that $BC=CB$; we prove directly that $BCe_{ijk}=CBe_{ijk}$ for 
all $i,j,k$. The property (ii) follows from (2) and the definition of $C$. 

Next we show that $e_{111}$ is a cyclic vector for $(C,B)$ and $e_{1r_1\lambda_1}$ is a cyclic vector for $(C^T,B^T)$. 
We denote by $\cA$ the unital subalgebra of $M_n(F)$ generated by $B$ and $C$ and by $\cA^T$ the unital subalgebra 
generated by the transposed matrices $B^T$ and $C^T$. We show by induction on $i$ that each $e_{ijk}$ is in the 
subspace $\cA e_{111}$. For $i=1$ we have $e_{1jk}=B^{k-1}C^{j-1}e_{111}$. We denote by $W_i$ the linear span of all 
vectors $e_{tjk}$ with $t\le i$. To prove the inductive step, we have to show that $e_{i+1,j,k}\in\cA W_i$. This 
follows since $e_{i+1,j,k}-B^{k-1}C^j e_{i,r_i,1}\in \cA W_i$. 

Before we prove that $e_{1r_1\lambda_1}$ is a cyclic vector for $(C^T,B^T)$ observe that $B^Te_{ijk}=e_{ij,k-1}$ and
that $C^Te_{ijk} = e_{i,j-1,k}$ if $j>1$ and $C^Te_{i1k} = e_{i-1,r_{i-1},k} + e_{i+1,r_{i+1},k-\delta_{i+1}}$. Again 
we proceed to prove that $e_{1r_1\lambda_1}$ is cyclic by induction on $i$.

For $i=1$ we have $e_{1jk}=\left(B^T\right)^{\lambda_1-k}\left(C^T\right)^{r_1-j}e_{1r_1\lambda_1}$. The inductive
step follows since
$e_{i+1,j,k}-\left(B^T\right)^{\lambda_i-k}\left(C^T\right)^{r_i-j}e_{ir_i\lambda_i}\in\cA^TW_i$.
\end{proof}

\begin{Proposition}\label{th1}
The subset $U$ is dense in $\cN_2\times F^n \times F^n$ and the subset $U_B$ is dense in
$\cN_B\times F^n \times F^n$.
\end{Proposition}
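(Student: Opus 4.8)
The plan is to deduce Proposition~\ref{th1} from Lemma~\ref{lem1} by an irreducibility-plus-dimension argument. Recall that $\cN_2$ is irreducible (it is the nilpotent variety of the algebraic Lie algebra of pairs, or one may argue directly that $\cN_B$ is irreducible by \cite{Bas2003} and that $\cN_2$ fibers over $\cN$ with irreducible fibers), hence so is $\cN_2\times F^n\times F^n$; likewise $\cN_B\times F^n\times F^n$ is irreducible. Since $U$ (respectively $U_B$) is an open subset of an irreducible variety, it is dense as soon as it is nonempty. So the whole content of the proposition is the nonemptiness of $U$ and of $U_B$, i.e. the existence of a commuting nilpotent pair admitting a simultaneous cyclic vector and a simultaneous cocyclic vector.

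\medskip

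First I would handle $U_B$, which is the subtler of the two because $B$ is fixed. Given $B$, pick any $A\in\cN_B$ (for instance $A=0$, or better a generic one); Lemma~\ref{lem1} produces a nilpotent matrix $C$ commuting with $B$, with a cyclic vector $v$ for $(C,B)$ and a cocyclic vector $w$ for $(C,B)$. Thus the quadruple $(C,B,v,w)$ lies in $U$, and in particular $(C,v,w)\in U_B$, so $U_B\neq\emptyset$. Combined with the openness of $U_B$ inside $\cN_B\times F^n\times F^n$ (established in the paragraph preceding the lemma) and the irreducibility of $\cN_B$, this forces $U_B$ to be dense in $\cN_B\times F^n\times F^n$. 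The same quadruple $(C,B,v,w)\in U$ shows $U\neq\emptyset$; since $U$ is open in the irreducible variety $\cN_2\times F^n\times F^n$, it is dense there as well.

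\medskip

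The one point requiring care — and the place I expect the argument to need its only real input beyond Lemma~\ref{lem1} — is the irreducibility of $\cN_2$ and of $\cN_B$. For $\cN_B$ this is exactly Basili's theorem \cite{Bas2003} already cited in the introduction. For $\cN_2$ one can either invoke the referee's remark in the introduction that the nilpotent variety of an algebraic Lie algebra is irreducible (applied to the Lie algebra of the centralizer, fibered appropriately), or argue more elementarily: the map $\cN_2\to\cN$, $(A,B)\mapsto B$, has image $\cN$ (irreducible) and all fibers $\cN_B$ irreducible of the same dimension by \cite{Bas2003}, and $\cN_2$ is the union over $\OO\subset\cN$ of the corresponding pieces, so a standard fibration argument (the total space of a morphism with irreducible base and equidimensional irreducible fibers, with $\Gl_n$ acting transitively on the base $\cN=\bigcup\OO_\lambda$ stratum by stratum and the generic stratum dense) gives irreducibility of $\cN_2$. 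Once irreducibility is in hand, density of a nonempty open set is immediate, and the proposition follows.
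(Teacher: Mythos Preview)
Your approach is correct and genuinely different from the paper's. You argue that $U$ (resp.\ $U_B$) is open in an irreducible variety, so density follows from nonemptiness, which Lemma~\ref{lem1} supplies. The paper instead shows directly that an arbitrary point $(A,B,v,w)$ lies in $\overline U$ by an affine-line argument: with $C,v',w'$ as in Lemma~\ref{lem1}, the line $\alpha\mapsto(\alpha A+(1-\alpha)C,\,B,\,\alpha v+(1-\alpha)v',\,\alpha w+(1-\alpha)w')$ stays inside $\cN_2\times F^n\times F^n$ (this is exactly where property~(ii) of the lemma is used) and meets $U$ at $\alpha=0$, so the point at $\alpha=1$ lies in $\overline U$.

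Your route never uses property~(ii) of Lemma~\ref{lem1}, but it needs the irreducibility of $\cN_2$ as an external input, and your justification of that is shaky. First, $\cN_2$ is not in any evident way the nilpotent cone of an algebraic Lie algebra (the remark in the introduction concerns $\cN_B$, viewed as the nilpotent cone of the centralizer). Second, your fibration sketch fails because the fibers $\cN_B$ of $\cN_2\to\cN$, while irreducible by \cite{Bas2003}, are \emph{not} equidimensional: for $B=0$ the fiber is $\cN$ of dimension $n^2-n$, while for $B$ regular the fiber has dimension $n-1$. The correct citation is simply \cite{Bar2001}, whose main theorem is precisely the irreducibility of $\cN_2$. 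The paper's affine-line argument has the advantage of being self-contained---it does not invoke Baranovsky's theorem, and indeed it essentially reproduces the deformation trick Baranovsky used to prove that theorem. For $U_B$ the two approaches are on equal footing, since irreducibility of $\cN_B$ is already taken for granted throughout the paper.
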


\begin{proof} Consider a quadruple $(A,B,v,w)\in\cN_2\times F^n \times F^n$. By Lemma \ref{lem1} we can find a matrix $C$
and vectors $v',w'$ such that $(C,B,v',w')\in U$. Then the affine line $L$ of all the points $(\alpha A + \alpha' C, B,
\alpha v + \alpha' v', \alpha w + \alpha' w')$, where $\alpha\in F$ is arbitrary and $\alpha'=1-\alpha$, has nonempty 
intersection with $U$. Hence $L\cap U$ is dense in $L$ and $(A,B,v,w)$ is in the closure of $U$.

The same argument shows that also $U_B$ is dense in $\cN_B\times F^n \times F^n$.
\end{proof}

Next we give a proof of a known result which we could not find stated in the literature. 

\begin{Proposition}\label{Gorenstein}
A commutative subalgebra $\mathcal{R}$ of $M_n(F)$ is Gorenstein if 
both $\mathcal{R}$ and $\mathcal{R}^T$ have a cyclic vector, i.e., the action of $\mathcal{R}$ is cyclic and cocyclic.
\end{Proposition}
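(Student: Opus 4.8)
The plan is to deduce from the two cyclicity hypotheses that $\mathcal{R}$ is self-dual, i.e., isomorphic as a module over itself to $\operatorname{Hom}_F(\mathcal{R},F)$, which is the standard characterization of the Gorenstein property for a finite-dimensional commutative $F$-algebra (see \cite[\S 21]{Eis}); in the local case this is exactly the condition $\dim_F\Soc(\mathcal{R})=1$ used in the definition above.

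First I would record the elementary fact that a cyclic vector for a \emph{commutative} subalgebra $\mathcal{R}\subseteq M_n(F)=\operatorname{End}_F(V)$, $V=F^n$, yields an isomorphism of $\mathcal{R}$-modules $\mathcal{R}\xrightarrow{\ \sim\ }V$. Indeed, if $v$ is cyclic then $r\mapsto rv$ maps $\mathcal{R}$ onto $V$, and its kernel is zero: if $rv=0$ then for every $s\in\mathcal{R}$ we get $r(sv)=s(rv)=0$ by commutativity, so $r$ annihilates $\mathcal{R}v=V$ and hence $r=0$ because $\mathcal{R}$ acts faithfully on $V$. In particular $\dim_F\mathcal{R}=n$.

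Next I would handle the transposed algebra. Since $\mathcal{R}$ is commutative, $X\mapsto X^{T}$ is an $F$-algebra isomorphism $\mathcal{R}\to\mathcal{R}^{T}$, and the action of $\mathcal{R}^{T}$ on $F^{n}$ is nothing but the action of $\mathcal{R}$ on the dual space $V^{*}=\operatorname{Hom}_F(V,F)$ given by $(r\cdot\varphi)(x)=\varphi(rx)$; concretely, writing $\varphi$ as a row vector $u^{T}$ one has $r\cdot\varphi\leftrightarrow r^{T}u$. Thus a cyclic vector $w$ for $\mathcal{R}^{T}$ is the same as a generator of $V^{*}$ as an $\mathcal{R}$-module, and $\mathcal{R}$ acts faithfully on $V^{*}$. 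Applying the previous step (or simply a dimension count, since $r\mapsto r\cdot w$ is onto and $\dim_F V^{*}=n=\dim_F\mathcal{R}$) gives $\mathcal{R}\cong V^{*}$ as $\mathcal{R}$-modules, and combining with $\mathcal{R}\cong V$ we obtain
\[
\mathcal{R}\ \cong\ V^{*}\ =\ \operatorname{Hom}_F(V,F)\ \cong\ \operatorname{Hom}_F(\mathcal{R},F).
\]

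Finally I would translate this back to the socle. Transporting $\Soc(\mathcal{R})=(0:m)$ through $\mathcal{R}\cong V$ identifies it with $\{x\in V:mx=0\}$, and this subspace of $V$ is the annihilator, under the canonical pairing $V^{*}\times V\to F$, of the subspace $mV^{*}\subseteq V^{*}$ spanned by the vectors $r^{T}u$ with $r\in m$, $u\in F^{n}$; hence $\dim_F\Soc(\mathcal{R})=n-\dim_F(mV^{*})$. Under the $\mathcal{R}$-module isomorphism $\mathcal{R}\cong V^{*}$ the subspace $mV^{*}$ corresponds to $m\cdot\mathcal{R}=m$, which has dimension $n-1$ because $\mathcal{R}/m=F$, so $\dim_F\Soc(\mathcal{R})=1$ and $\mathcal{R}$ is Gorenstein. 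I do not expect a genuine obstacle here — the result is classical and the argument is short; the only points that need care are the bookkeeping identifying the $\mathcal{R}^{T}$-action on $F^{n}$ with the $\mathcal{R}$-action on $V^{*}$, and, if one wants the proposition for an arbitrary (not necessarily local) commutative subalgebra, noting that the self-duality $\mathcal{R}\cong\operatorname{Hom}_F(\mathcal{R},F)$ is the Gorenstein condition in that generality as well, since it descends to each local factor of $\mathcal{R}$.
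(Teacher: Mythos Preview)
Your proof is correct and follows essentially the same route as the paper: both establish that cyclicity of $\mathcal{R}$ gives $\mathcal{R}\cong V$ and cyclicity of $\mathcal{R}^{T}$ gives $\mathcal{R}\cong V^{*}$ as $\mathcal{R}$-modules, yielding the self-duality $\mathcal{R}\cong\operatorname{Hom}_F(\mathcal{R},F)$ and hence the Gorenstein property. The paper simply invokes \cite[Lem.~2.5]{NeuSal} for the two module isomorphisms and \cite[pp.~525--526]{Eis} for the conclusion, whereas you prove the cyclic-vector isomorphisms by hand and additionally verify $\dim_F\Soc(\mathcal{R})=1$ directly; this last step is not strictly necessary once self-duality is in hand, but it matches the socle definition of Gorenstein stated in the paper.
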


\begin{proof} By Lemma 2.5, parts (1) and (2), of \cite{NeuSal} the subalgebra $\mathcal{R}$ is cyclic if and only if 
$F^n$ and $\mathcal{R}$ are isomorphic as $\mathcal{R}$-modules, and the subalgebra $\mathcal{R}^T$ is cyclic
if and only if $F^n$ and $\mathcal{R}^T$ are isomorphic $\mathcal{R}$-modules. Then our assumptions imply
that $\mathcal{R}$ and its dual module 
are isomorphic $\mathcal{R}$-modules, and thus $\mathcal{R}$ is Gorenstein by the definition 
\cite[pp. 525-526]{Eis}.
\end{proof}

We denote by $\pi:\cN_2\times F^n\times F^n\to\cN_2$ and $\pi_B:\cN_B\times F^n\times F^n\to\cN_B$ the projections 
to the first factor. Then, it follows by Proposition \ref{Gorenstein} that $\pi(U)$ is the set of 
pairs $(A,B)$ of nilpotent matrices such that the unital algebra $\cA$ generated by $A$ and $B$ is Gorenstein
of (vector space) dimension $n$. Moreover, such an $\cA$ is a complete intersection since its embedding dimension is 
at most two \cite[Cor. 21.20]{Eis}. Similarly, $\pi_B(U_B)$ is the set of all $A\in\cN_B$ such that $\cA(A,B)$
is Gorenstein of dimension $n$.

As a consequence of Proposition \ref{th1} we obtain the main results of this section:

\begin{Corollary}\label{cor01}
The subset $\pi(U)$ of those pairs in $\cN_2$ that generate a Gorenstein algebra of (vector space) dimension
$n$ is dense in $\cN_2$.
\end{Corollary}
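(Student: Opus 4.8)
The plan is to derive Corollary~\ref{cor01} formally from Proposition~\ref{th1} together with the identification of $\pi(U)$ recorded in the paragraph just preceding the statement; no new geometric input is needed, since the substantive work has already been carried out in Lemma~\ref{lem1}, Proposition~\ref{th1} and Proposition~\ref{Gorenstein}.

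First I would isolate the only topological fact involved: for a continuous map $f\colon X\to Y$ and any subset $S\subseteq X$ one has $f(\overline{S})\subseteq\overline{f(S)}$, because $f^{-1}\!\left(\overline{f(S)}\right)$ is a closed subset of $X$ containing $S$ and hence $\overline{S}$. Applying this to the projection $\pi\colon\cN_2\times F^n\times F^n\to\cN_2$ with $S=U$, and invoking Proposition~\ref{th1}, which asserts that $\overline{U}=\cN_2\times F^n\times F^n$, we obtain
$$\cN_2=\pi\!\left(\overline{U}\right)\subseteq\overline{\pi(U)},$$
so $\pi(U)$ is dense in $\cN_2$. The identical one-line argument applied to $\pi_B$ and $U_B$ gives that $\pi_B(U_B)$ is dense in $\cN_B$.

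It then remains to recall why $\pi(U)$ is exactly the set described in the Corollary, which is the place where Proposition~\ref{Gorenstein} is used. By construction $(A,B)\in\pi(U)$ if and only if there exist vectors $v,w$ with $v$ cyclic for $(A,B)$ and $w$ cyclic for $(A^T,B^T)$. If such a $v$ exists, the $\cA$-module homomorphism $\cA\to F^n$, $a\mapsto av$, is surjective by cyclicity and injective because $av=0$ forces $a\cdot F^n=a\cdot\cA v=\cA(av)=0$ and hence $a=0$; thus $\dim_F\cA=n$, and Proposition~\ref{Gorenstein} (applied to $\cA$, using that $\cA^T$ is also cyclic) shows that $\cA$ is Gorenstein. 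Conversely, as explained in the paragraph preceding the Corollary, a commutative subalgebra of $M_n(F)$ that is Gorenstein of vector space dimension $n$ is cyclic and cocyclic, so the pair lies in $\pi(U)$. Since the density transfer above is purely formal, there is no genuine obstacle left at this stage: the real difficulty was overcome already in Lemma~\ref{lem1}, whose construction of the auxiliary matrix $C$ is what makes $U$ nonempty — and then, via a line argument, dense — so that the projection inherits density.
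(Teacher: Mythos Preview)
Your proposal is correct and follows exactly the route the paper intends: the paper states the corollary without proof as an immediate consequence of Proposition~\ref{th1} together with the identification of $\pi(U)$ via Proposition~\ref{Gorenstein}, and you simply spell out the standard fact that the image of a dense set under a continuous surjection is dense. There is nothing to add.
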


\begin{Corollary}\label{cor1}
The subset $\pi_B(U_B)$ is dense in $\cN_B$. So, the unital subalgebra $\cA$ generated by a nilpotent matrix 
$B$ and a generic matrix $A$ in $\cN_B$ is Gorenstein; 
moreover, it is a complete intersection. 
Equivalently, both $\cA$ and $\cA^T$ have a cyclic vector, i.e., the action of $\cA$ is cyclic and cocyclic. 
\end{Corollary}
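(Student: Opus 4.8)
The plan is to obtain this as an essentially immediate consequence of the two preceding results. First I would note that, by definition, $U_B = \{(A,v,w) : (A,B,v,w)\in U\}$ is precisely the set of triples where $v$ is cyclic for $(A,B)$ and $w$ is cyclic for $(A^T,B^T)$, and that Proposition \ref{th1} already tells us $U_B$ is dense in $\cN_B\times F^n\times F^n$. Applying the projection $\pi_B$ to the first factor and using that projections of dense sets (or more carefully, the image of a dense constructible set together with the fact that $\pi_B$ is an open or at least dominant map onto $\cN_B$) stay dense, I get that $\pi_B(U_B)$ is dense in $\cN_B$. The one point to be slightly careful about is that density is not automatically preserved under arbitrary images, but here $\pi_B$ is the restriction of a linear projection and $U_B$ is open in the irreducible variety $\cN_B\times F^n\times F^n$; the closure of $U_B$ is the whole space, so $\pi_B(U_B)$ contains a dense (open) subset of $\cN_B$. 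Alternatively one can argue fibrewise: for each $A$ in a dense subset of $\cN_B$, the fibre $U_B\cap(\{A\}\times F^n\times F^n)$ is nonempty.

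Next I would identify $\pi_B(U_B)$ with the desired set of matrices. By Proposition \ref{Gorenstein}, if $A\in\pi_B(U_B)$ then there exist $v,w$ with $v$ cyclic for $\cA = \cA(A,B)$ and $w$ cyclic for $\cA^T$, so the commutative subalgebra $\cA$ of $M_n(F)$ is Gorenstein. Conversely, cyclicity of the $\cA$-action means exactly that some $v$ is a cyclic vector and cocyclicity that some $w$ is cyclic for $\cA^T$, so the two descriptions coincide; moreover cyclicity forces $\dim_F\cA = n$ since $F^n\cong\cA$ as $\cA$-modules (as used in the proof of Proposition \ref{Gorenstein}, via \cite{NeuSal}). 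Finally, since $\cA$ is generated by the two elements $A$ and $B$, its embedding dimension is at most two, so by \cite[Cor. 21.20]{Eis} the Gorenstein algebra $\cA$ is in fact a complete intersection. Combining these observations with the density just established gives all the assertions of the corollary.

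I do not expect a serious obstacle here: the real content has already been placed in Lemma \ref{lem1} (the Baranovsky-type construction of $C$), in Proposition \ref{th1} (the line-through-$C$ density argument), and in Proposition \ref{Gorenstein} (cyclic $+$ cocyclic $\Rightarrow$ Gorenstein). The only thing that needs a line of justification is the passage from "$U_B$ dense" to "$\pi_B(U_B)$ dense", and that is routine since $\pi_B$ is (the restriction of) a coordinate projection and $\cN_B\times F^n\times F^n$ is irreducible. So the corollary is really a bookkeeping statement that packages Corollary \ref{cor01}'s argument in the fibre over a fixed $B$.
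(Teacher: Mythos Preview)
Your proposal is correct and follows exactly the approach the paper intends: the paper gives no separate proof of this corollary, treating it as an immediate consequence of Proposition~\ref{th1} and Proposition~\ref{Gorenstein} together with the paragraph just before the corollaries, which already identifies $\pi_B(U_B)$ with the set of $A\in\cN_B$ for which $\cA(A,B)$ is Gorenstein of dimension $n$ and invokes \cite[Cor.~21.20]{Eis} for the complete intersection claim. Your only addition is an explicit word on why density survives the projection $\pi_B$, which the paper leaves tacit; that is a harmless (and helpful) elaboration, not a departure in method.
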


%We note that there is an isomorphism $\varphi:\cA\to\cA^T$ of $\cA$-modules when the algebra $\cA$ is Gorenstein. 
%In particular, we can take $\varphi(X)=X^T$ and so $\varphi$ can be an involution, which we call the Gorenstein involution. 
%It is an interesting question, if the Gorenstein involution is related to the Basili-Iarrobino involution on $\cN_B$ 
%\cite{BaIa_Invol,Iarr2007}.

\vskip 20pt

\section{$\DD$ is an idempotent map}

A pair of commuting nilpotent matrices $(A,B)$ generates a (unital) local artinian algebra $\cA$. We denote its maximal
ideal by $\cM$. The associated graded algebra of $\cA$ is 
$$\gr \cA = \oplus_{i=0}^k \cM^i/\cM^{i+1},$$ 
where $\cM^0=\cA$ and
$k$ is such that $\cM^k\neq 0$ and $\cM^{k+1}=0$. %We call $k$ the \emph{socle length} of $\cA$. 
The \emph{Hilbert function} $H(\cA)$ of $\cA$ is the sequence $(h_0,h_1,\ldots,h_k)$, where 
$$h_i=\dim_F \cM^i/\cM^{i+1}.$$ 
We have 
$\sum_{i=0}^k h_i =\dim_F\cA$. If $\cA$ is Gorenstein then $h_k=1$. Furthermore, Macaulay's Theorem 
on the Hilbert function of the intersection of two plane curves \cite{Mac1904} (see Iarrobino \cite{Iarr1977} or 
\cite[p. 23]{Iarr1994})
says that the Hilbert function $H(\cA )$ of an artinian local complete intersection of the embedding dimension at most $2$ 
satisfies 
$$H(\cA )=(1,2,\ldots,d, h_d,h_{d+1},\ldots,h_i,\ldots,h_k),$$ 
where $h_{d-1}=d\ge h_d\ge h_{d+1}\ge \ldots\ge 1$ and $h_{i-1}-h_{i}\le 1$ for all 
$i=d,d+1,\ldots,k$ and $h_k=1$. 

In the rest of the paper, we write $\lambda=(\lambda_1,\lambda_2,\ldots,\lambda_l)$, where $\lambda_1
\ge\lambda_2\ge\cdots\ge\lambda_l\ge 1$. (Compare with the proof of Lemma \ref{lem1},
where we used different, 'power type', notation for parts of partition $\lambda$.) We assume that $\lambda_j=0$ for $j>l$. 

The set of all partitions $\Lambda(n)$ of $n$ has a partial order given by
$\lambda\prec\mu$ if $\sum_{i=1}^j\lambda_i\leq\sum_{i=1}^j \mu_i$ for $j=1,2,\ldots$. To each partition $\lambda$
we associate its \emph{Ferrers diagram}, a diagram with $\lambda_i$ boxes in the $i$-th row. 
%Then we define the sequence
%$H(\lambda)=(h_1,h_2,\ldots, h_k)$ of \emph{diagonal lengths} of $\lambda$. The diagonal length $h_i$ is equal to 
%the length of the $i$-th diagonal from upper-right to lower-left of the Ferrers diagram of $\lambda$. 
We denote by $\lambda(H)$ the %maximal 
partition, %$\lambda$ in the partial order $\prec$
which has %diagonal lengths $H(\lambda)$ equal to a sequence $H$. So, 
the $i$-th part %of $\lambda(H)$ is 
equal to the number of elements in the sequence $H=(h_1,h_2,\ldots, h_k)$ such that $h_j\ge i$.

\begin{Example}
%Partition $\lambda=(4,4,3)$ has diagonal lengths $H(\lambda)=(1,2,3,3,2)$ and partition $\mu=(4,4,1)$ has diagonal 
%lengths $H(\mu)=(1,2,3,2,1)$. On the other hand, 
If $H=(1,2,3,2,1)$ then $\lambda(H)=(5,3,1)$ and if 
$H=(1,2,3,3,1)$ then $\lambda(H)=(5,3,2)$.
\end{Example}

Here we recall two results of Basili and Iarrobino \cite{BaIa2007} that we use in the proof below. They proved 
\cite[Thm. 2.21]{BaIa2007} that the Hilbert functions of the algebras $\cA(A,B)$, $A\in\cN_B$, determine $\DD(\lambda)$:
\begin{equation}\label{Dlambda}
\DD(\lambda)=\sup\{\lambda(H(\cA));\ \cA=\cA(A,B),\, A\in\cN_B,\ \text{such that}\ \dim\cA=n\}.
\end{equation}
They also proved \cite[Thm. 1.12]{BaIa2007} that if the parts in a partition $\lambda$ differ by at least two then 
$\lambda$ is stable, i. e. $\DD(\lambda)=\lambda$. 

The following is the main result of our paper.

%Similarly, we define $\DD(\OO_B)=\DD(\OO_{\lambda})=\OO_{\mu}$.

\begin{Theorem}\label{thm2} Assume that $B$ is a nilpotent matrix and $\lambda$ the corresponding partition. Then the 
partition $\DD(\lambda)$ corresponding to the generic element $A\in\cN_B$ has decreasing parts differing by at least $2$
and $\DD(\DD(\lambda))=\DD(\lambda)$, i.e. $\DD$ is idempotent.
\end{Theorem}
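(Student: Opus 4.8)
The statement has two halves, and the second ($\DD\circ\DD=\DD$) drops out of the first: a partition whose parts decrease with consecutive gaps at least $2$ is stable by \cite[Thm. 1.12]{BaIa2007}, so once $\DD(\lambda)$ is shown to have that shape, applying $\DD$ to it changes nothing. Hence the entire task is to prove that $\DD(\lambda)$ has decreasing parts differing by at least $2$. The plan is to read $\DD(\lambda)$ off the Hilbert function of the algebra generated by a generic commuting pair, using Corollary \ref{cor1} (genericity of Gorenstein, i.e. complete intersection, pairs), Macaulay's shape theorem for such Hilbert functions, and the Basili--Iarrobino identity \eqref{Dlambda}.

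The combinatorial heart is the following elementary claim, which I would establish first. If $H=(h_0,h_1,\ldots,h_k)$ has the staircase shape of Macaulay's theorem, i.e. $h_0=1,\,h_1=2,\,\ldots,\,h_{d-1}=d$ and then $d\ge h_d\ge h_{d+1}\ge\cdots\ge h_k=1$ with $h_{i-1}-h_i\le 1$, then $\lambda(H)$ has exactly $d$ parts and consecutive parts differing by at least $2$. Indeed, such an $H$ is unimodal, so for $1\le v\le d$ the set $\{j:h_j\ge v\}$ is an interval $[v-1,b_v]$ (on the ascending stretch the value $v$ is first reached at $j=v-1$), whence $\lambda(H)_v=b_v-v+2$ and
\[
\lambda(H)_v-\lambda(H)_{v+1}=b_v-b_{v+1}+1 .
\]
Because every descending step of $H$ is at most $1$, the entry of $H$ just past index $b_{v+1}$ is at most $v$ by maximality of $b_{v+1}$ yet at least $v$ by the unit-step condition, hence equals $v$; so $b_v\ge b_{v+1}+1$, and therefore $\lambda(H)_v-\lambda(H)_{v+1}\ge 2$. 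Since $\lambda(H)_v>0$ precisely for $v\le\max_j h_j=d$, the claim follows.

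Next I would bring in the geometry. By Corollary \ref{cor1}, for $A$ in a dense open subset of $\cN_B$ the algebra $\cA=\cA(A,B)$ is a complete intersection of embedding dimension at most $2$ with $\dim_F\cA=n$, so by Macaulay's theorem $H(\cA)$ has the staircase shape above, and hence $\lambda(H(\cA))$ has decreasing parts differing by at least $2$. It then remains to identify $\lambda(H(\cA))$, for the generic such $A$, with $\DD(\lambda)$. One inequality, $\lambda(H(\cA))\preceq\DD(\lambda)$, is immediate from \eqref{Dlambda}; for the reverse one needs that the supremum in \eqref{Dlambda} is actually attained by the generic $A$ --- equivalently, that $A\mapsto\lambda(H(\cA(A,B)))$ takes its dominance-largest value on a dense open subset of the locus $\{\dim_F\cA(A,B)=n\}$. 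This would follow from upper semicontinuity of the generic Hilbert function in the flat family $\{\cA(A,B)\}$ over that locus; since the Gorenstein locus of Corollary \ref{cor1} is also dense open, the two dense open sets meet, giving $\DD(\lambda)=\lambda(H(\cA))$ with $\cA$ a complete intersection. By the claim, $\DD(\lambda)$ then has decreasing parts differing by at least $2$, and \cite[Thm. 1.12]{BaIa2007} finishes the proof.

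I expect the genuine obstacle to be exactly that identification step --- showing that the generic pair, which by Section~2 generates a Gorenstein complete intersection, realizes (and is not merely dominated by) the supremum defining $\DD(\lambda)$; note that in general not every $\lambda(H(\cA(A,B)))$ with $\dim_F\cA=n$ has gap at least $2$ (e.g. for $\lambda=(2,1)$ a non-generic $A$ yields $H=(1,2)$ and $\lambda(H)=(2,1)$), so the genericity must really be used. The combinatorial claim, by contrast, is the short computation displayed above, and the density of Gorenstein pairs is already supplied by Corollary \ref{cor1}.
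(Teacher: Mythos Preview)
Your approach is correct and essentially identical to the paper's: both combine Corollary~\ref{cor1}, Macaulay's theorem, formula~\eqref{Dlambda}, and \cite[Thm.~1.12]{BaIa2007} in the same order and to the same end. You spell out two points the paper leaves terse---the combinatorial passage from the Macaulay shape of $H$ to the gap-$\ge 2$ property of $\lambda(H)$, and the identification of the supremum in~\eqref{Dlambda} with the generic (hence complete-intersection) value---which the paper dispatches in the single clause ``since $\cN_B$ is irreducible and since $\DD(\lambda)$ is the partition corresponding to the generic $A\in\cN_B$, it is enough to take in~\eqref{Dlambda} the supremum over all $\lambda(H(\cA))$ where $\cA$ is a complete intersection.''
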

 
\begin{proof} Recall that $\DD(\lambda)$ is given by (\ref{Dlambda}). Corollary \ref{cor1} implies 
that for a generic $A\in\cN_B$ the algebra 
$\cA=\cA(A,B)$ is a complete intersection (therefore Gorenstein) of dimension $n$. Since $\cN_B$ is irreducible
and since $\DD(\lambda)$ is the partition corresponding to the generic $A\in\cN_B$, 
it is enough to take in (\ref{Dlambda}) the supremum over all $\lambda(H(\cA))$, where 
$\cA$ is a complete intersection. The above stated Macaulay's Theorem on the Hilbert function of the intersection of two 
plane curves then implies that 
the parts in $\DD(\lambda)$ differ by at least two. By \cite[Thm. 1.12]{BaIa2007} or \cite[Cor 1.5]{Iarr2007} 
it follows that $\DD(\DD(\lambda))=\DD(\lambda)$. 
\end{proof}

\vskip 20pt

\section{Description of $\DD(\lambda)$ when it has at most two parts}

A partition $\lambda$ is called a \emph{almost rectangular} if the largest and the smallest part of $\lambda$
differ by at most one. (Note that in \cite{BaIa2007} the term \emph{string} 
is used for almost rectangular 
partition.) We denote by $r_B$ or $r_{\lambda}$ the smallest number $r$ such that $\lambda$
is a union of $r$ almost rectangular partitions. (Here $B$ is a nilpotent matrix such that its Jordan canonical 
form is determined by $\lambda$.) Basili \cite[Prop. 2.4]{Bas2003} proved that a generic $A\in\cN_B$ has
rank $n-r_B$, which is maximal possible in $\cN_B$ and that $\DD(\lambda)$ has $r_B$ parts. 

\begin{Example}
For instance, if $\lambda=(4,3,2,1)$ and $\mu=(7,7,6,4,4,3,2)$ then $r_{\lambda}=2$ and $r_{\mu}=3$.
\end{Example}

It is an interesting and important question to describe $\DD(\lambda)$ in terms of $\lambda$ 
(see \cite[\S 3]{Pany2007}, in particular \cite[Problem 1]{Pany2007}). Using 
\cite[Thm. 13]{Obl_indeks} and  \cite[Cor. 3.29]{BaIa_Invol} it is easy to answer this question when 
$\DD(\lambda)$ has at most two parts, i. e. when $r_B\le 2$. Here we would like to remark that the 
proofs of Lemma 11 and Theorems 12 and 13 in \cite{Obl_indeks} hold over any field of characteristic $0$,
while Basili and Iarrobino assume in \cite{BaIa_Invol} that the underlying field is algebraically closed
of arbitrary characteristic.

Now, if $r_B=1$ then $\DD(\lambda)=(n)$. If $r_B=2$ then it is enough to know the maximal index of
nilpotency of an element of $\cN_B$ to describe $\DD(\lambda)$, since $\DD(\lambda)$ is the maximal partition
among those corresponding to elements $A\in\cN_B$ \cite[Lem. 1.6]{BaIa2007}. Applying
the description of the maximal index of nilpotency given in \cite[Thm. 13]{Obl_indeks}
and \cite[Cor. 3.29]{BaIa_Invol}, we have the following result.

\begin{Theorem}If $\lambda=(\lambda_1,\lambda_2,\ldots,\lambda_l)$, $\lambda_1\ge\lambda_2\ge\ldots\ge
\lambda_l\ge 1$ is such that $r_{\lambda}=2$ then
$\DD(\lambda)=(i_{\lambda},n-i_{\lambda})$, where $i_{\lambda}$ is the maximal index
of nilpotency in $\cN_B$ given by
$$i_{\lambda}= \max\limits_{1 \leq i < l} \left\{2(i-1)+\lambda_{i}+\lambda_{i+1}+\ldots+\lambda_{i+r}; \; 
\lambda_{i} - \lambda_{i+r}\leq 1,\ \lambda_{i-1}\ge 2\ {\rm if}\ i>1\right\} \, .$$
\end{Theorem}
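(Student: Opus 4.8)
The plan is to reduce the statement entirely to two already-quoted facts: Basili's result that $\DD(\lambda)$ has $r_B$ parts, so that $r_\lambda=2$ forces $\DD(\lambda)=(\mu_1,\mu_2)$ with $\mu_1+\mu_2=n$; and the fact \cite[Lem. 1.6]{BaIa2007} that $\DD(\lambda)$ dominates every partition $\partition{A}$ with $A\in\cN_B$. First I would observe that among partitions of $n$ into exactly two parts, the dominance order is total and is governed by the first part: $(a,n-a)\succeq(b,n-b)$ iff $a\ge b$. Hence $\DD(\lambda)=(i_\lambda,n-i_\lambda)$ where $i_\lambda=\max\{\partition{A}_1 : A\in\cN_B\}$ is exactly the maximal index of nilpotency attained in $\cN_B$. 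So the theorem is equivalent to the purely combinatorial formula for $i_\lambda$, and the real content is imported from \cite[Thm. 13]{Obl_indeks} and \cite[Cor. 3.29]{BaIa_Invol}.

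Next I would spell out why that imported formula has the stated shape, at least at the level of a sketch. Given $A\in\cN_B$ nilpotent, a lower bound on the nilpotency index comes from exhibiting a long ``chain'' of Jordan blocks of $B$ that $A$ can link together: if $\lambda_i,\lambda_{i+1},\ldots,\lambda_{i+r}$ is a block of consecutive parts that form an almost rectangular subpartition (i.e. $\lambda_i-\lambda_{i+r}\le 1$), then one can build an $A$ commuting with $B$ whose action cascades through these $r+1$ blocks, producing a single Jordan block for $A$ of size at least $\lambda_i+\lambda_{i+1}+\cdots+\lambda_{i+r}$, and one gains an extra $2$ for each of the $i-1$ larger parts $\lambda_1,\ldots,\lambda_{i-1}$ that sit above (the coefficient $2(i-1)$), provided the block is ``maximal'' in the relevant sense, encoded by $\lambda_{i-1}\ge 2$ when $i>1$. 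Conversely, Basili--Iarrobino's bound shows no $A\in\cN_B$ can do better, so the maximum of these quantities over admissible $i$ (and the corresponding $r$) equals $i_\lambda$. I would phrase the write-up as: combine the two cited results to get the displayed formula for the maximal index, then invoke the two-part reduction above.

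The main obstacle I anticipate is not in the logical skeleton — which is short — but in verifying that the two external formulas, stated in \cite{Obl_indeks} (characteristic $0$) and \cite{BaIa_Invol} (algebraically closed, arbitrary characteristic), genuinely agree and together cover the algebraically closed case we need here, and in checking that the optimization over the hidden parameter $r$ has been correctly absorbed into the single $\max$ over $i$ written in the statement (the displayed formula writes $\lambda_{i}+\cdots+\lambda_{i+r}$ with an implicit ``for the largest admissible $r$''). I would therefore devote a sentence to noting that for fixed $i$ the inner quantity is monotone increasing in $r$ as long as $\lambda_i-\lambda_{i+r}\le 1$ persists, so one always takes $r=r(i)$ maximal, and then the whole expression is a maximum over $i$ alone. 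Apart from that bookkeeping, the proof is essentially a one-line deduction, so I would keep it short and defer all combinatorial detail to the cited papers.

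\begin{proof}
Since $r_\lambda=2$, Basili \cite[Prop. 2.4]{Bas2003} shows that $\DD(\lambda)$ has exactly two parts, say $\DD(\lambda)=(\mu_1,\mu_2)$ with $\mu_1\ge\mu_2\ge 1$ and $\mu_1+\mu_2=n$. Among partitions of $n$ with two parts the dominance order $\prec$ is a total order, and $(a,n-a)\succeq(b,n-b)$ holds precisely when $a\ge b$. By \cite[Lem. 1.6]{BaIa2007} the partition $\DD(\lambda)$ dominates $\partition{A}$ for every $A\in\cN_B$; in particular $\mu_1\ge\partition{A}_1$ for all such $A$, and since $\DD(\lambda)$ is itself realised by a generic $A\in\cN_B$, we get $\mu_1=\max\{\partition{A}_1;\ A\in\cN_B\}=i_\lambda$, the maximal index of nilpotency occurring in $\cN_B$. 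Thus $\DD(\lambda)=(i_\lambda,n-i_\lambda)$. It remains to identify $i_\lambda$, and this is exactly the content of \cite[Thm. 13]{Obl_indeks} together with \cite[Cor. 3.29]{BaIa_Invol}: for a partition $\lambda$ with $r_\lambda=2$,
$$
i_\lambda=\max\limits_{1\le i<l}\left\{2(i-1)+\lambda_i+\lambda_{i+1}+\cdots+\lambda_{i+r};\ \lambda_i-\lambda_{i+r}\le 1,\ \lambda_{i-1}\ge 2\ \text{if}\ i>1\right\},
$$
where for each admissible $i$ the index $r=r(i)$ is taken as large as possible subject to $\lambda_i-\lambda_{i+r}\le 1$ (the inner quantity being monotone in $r$ while this condition holds, the optimal $r$ is always this maximal one). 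Substituting this expression for $i_\lambda$ into $\DD(\lambda)=(i_\lambda,n-i_\lambda)$ yields the claim.
\end{proof}
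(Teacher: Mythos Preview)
Your proposal is correct and follows essentially the same route as the paper: the paper's argument (given in the paragraph preceding the theorem rather than in a formal proof) is precisely that $r_\lambda=2$ forces $\DD(\lambda)$ to have two parts by Basili \cite[Prop. 2.4]{Bas2003}, that $\DD(\lambda)$ is the dominance-maximal partition realised in $\cN_B$ by \cite[Lem. 1.6]{BaIa2007}, and that therefore only the maximal index of nilpotency matters, which is supplied by \cite[Thm. 13]{Obl_indeks} and \cite[Cor. 3.29]{BaIa_Invol}. Your additional remarks---that dominance is total on two-part partitions of $n$ and that the inner quantity is monotone in $r$---are helpful clarifications but do not change the strategy.
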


\begin{Example}
Suppose that $\lambda=(4,4,3,3,2)$ and $\mu=(5,5,3,3,2)$. Then we have $\DD(\lambda)=(14,2)$ and $\DD(\mu)=(12,6)$.
\end{Example}

\vskip 20pt

\section*{Acknowledgement}

The authors would like to thank Anthony Iarrobino for stimulating comments that improved 
the quality of the paper. We would also like to thank him and Roberta Basili for sharing with us early versions of
their papers. We acknowledge financial support of the Research Agency of the Republic of Slovenia.

\vskip 20pt

\end{document}